\documentclass[a4paper, 11pt]{amsart}
\usepackage[latin1]{inputenc}
\usepackage[ T1]{fontenc}
\usepackage[english]{babel}
\usepackage{amssymb}
\usepackage{amsmath}
\usepackage{amsthm}
\usepackage{amscd}
\usepackage{amsfonts}
\usepackage{stmaryrd}
\usepackage{pb-diagram}
\usepackage{epic,eepic,epsfig}
\usepackage{a4wide}
\usepackage{nextpage}
\usepackage{fancyhdr}
\usepackage{enumerate}
\usepackage{hyperref}
\usepackage{float}
\usepackage{tikz}

\pagestyle{fancy}
\fancyhead[LE,CE,RE,LO,CO,RO]{}
\fancyhead[LE,RO]{\thepage}
\fancyhead[CO]{\tiny\scshape{Heights of Drinfeld modular polynomials and Hecke images}}
\fancyfoot[LE,CE,RE,LO,CO,RO]{}

\newtheorem{prop}{Proposition}[section]

\newtheorem{ex}[prop]{Example}
\newtheorem{lemma}[prop]{Lemma}

\newtheorem{theorem}[prop]{Theorem}

\theoremstyle{definition}

\def\BC{\mathbb{C}}

\def\BZ{\mathbb{Z}}

\usepackage[OT2,T1]{fontenc}
\DeclareSymbolFont{cyrletters}{OT2}{wncyr}{m}{n}
\DeclareMathSymbol{\Sha}{\mathalpha}{cyrletters}{"58}

\begin{document}

\title{Partitions and Hecke images}
\author{}
\author{Florian Breuer \and Fabien Pazuki}

\address{School of Information and Physical Sciences, The University of Newcastle,
University Drive, Callaghan, NSW 2308, Australia.}
 \email{Florian.Breuer@newcastle.edu.au}

\address{Department of Mathematical Sciences, University of Copenhagen,
 Universitetsparken 5, 
2100 Copenhagen \O, Denmark.}
 \email{fpazuki@math.ku.dk}

\maketitle

\noindent \textbf{Abstract.}
We obtain a new family of relations satisfied by the partition function.
In contrast with most partition relations, these involve non-trivial roots of unity. We present two proofs, one using the fact that the discriminant modular form is a multiplicative Hecke eigenform, and one direct proof using q-series.
%

{\flushleft
\textbf{Keywords:} Partitions, modular forms, Hecke images, q-series.\\
\textbf{Mathematics Subject Classification:} 11F11, 11P82, 05A17. }


\begin{center}
---------
\end{center}

\thispagestyle{empty}

\maketitle

\section{Introduction}\label{section def}

For a positive integer $n$, we let $p(n)$ denote the number of partitions of $n$, i.e. 
the number of distinct ways of representing $n$ as a sum of positive integers. Two sums with the same terms in a different order are considered to be the same. By convention $p(0)=1$, and then $p(1)=1$, $p(2)=1$, $p(3)=2$, etc.

Let us denote by $P(q)$ the generating $q$-series of this partition function $p(\cdot)$. It satifies 
\begin{equation}
    P(q)=\sum_{n=0}^{\infty} p(n) q^n=\prod_{n=1}^{\infty}(1-q^n)^{-1}.
\end{equation}

Another classical $q$-series is given by the modular discriminant $\Delta(q)$. It may be defined as 
\begin{equation}
    \Delta(q)=q\prod_{n=1}^{\infty}(1-q^n)^{24},
\end{equation}
see for instance Theorem 8.1 page 62 of \cite{Sil94} for the classical link with the discriminant modular form (note that some authors normalize the discriminant modular form by multiplying this $q$-series by a power of $2\pi$). Let us remark that one may define (see page 65 of \cite{Sil94}) the Dedekind eta function by 
$$\eta(\tau)=e^{\frac{2i\pi\tau}{24}}\prod_{n=1}^{\infty}(1-q^n),$$ 
where $\tau\in{\mathcal{H}} := \{z\in\BC \;|\; \mathrm{Im}(z) > 0\}$, and $q=e^{2i\pi\tau}$. It naturally satisfies $\eta(\tau)^{24}=\Delta(\tau)$. For recent properties of the eta function in relation with partition theory, one may refer to \cite{Zag21}.

It is a natural idea to use the direct link between these $q$-expansions to deduce new properties for partitions, see for instance \cite{Wol08} where results on the parity of the partition function are obtained in this manner. 

The discriminant modular form is an important object, which appears notably in the theory of elliptic curves over the complex numbers. The study of transformations of elliptic curves by finite homomorphisms, the isogenies, gave deep consequences in the study of partitions: the action of Hecke operators were used in \cite{Ono00} to prove a conjecture of Erd\H{o}s on the distribution of values of $p(n)$ modulo an integer $m$. Very recently, Ono also found new formulas linking partitions with endomorphisms of elliptic curves and was able to recover classical congruences on partition numbers by a new method, see \cite{Ono25}. 

There are also other results relating $\Delta(\tau)$ to the size of Hecke images, see for instance \cite{Sil90, Aut, BP24, BGP25}. This lead us to realize that there is another way of using isogenies to deduce a result on partitions. In this note, we prove the following result.
For $N\geq 1$, consider the set of matrices: 
\[
C_N=\left\{\left(\begin{matrix}
a & b \\
0 & d
\end{matrix}\right)\; : \; a,b,d\in\BZ, \;
ad=N, \; a\geq1, \; 0\leq b\leq d-1,\; \gcd(a,b,d)=1 \right\}.
\]
The number of elements of $C_N$ is often denoted $\psi(N)$ and we have
\[
\#C_N = \psi(N) = \sum_{\substack{d|N \\ r=\gcd(d, N/d)}}\frac{d\varphi(r)}{r}  = N\prod_{p|N}\left(1+\frac{1}{p}\right). 
\]

\begin{theorem}\label{MainResult}
    Let $N\geq 1$ be a positive integer, $\zeta_N = e^{2\pi i/N}$ a primitive $Nth$ root of unity and $X$ an indeterminate. Then the following formula holds.
    \[
    \prod_{\left(\begin{matrix}
    a & b \\
    0 & d
    \end{matrix}\right) \in C_N}
    \left(1 + \sum_{n=1}^\infty \zeta_N^{abn} p(n) X^{a^2n}\right)
    =
    \left(1 + \sum_{n=1}^\infty p(n)X^{Nn}\right)^{\psi(N)}.
    \]
\end{theorem}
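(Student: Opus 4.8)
The plan is to rephrase the identity entirely in terms of the partition series $P(t)=\sum_{n\ge0}p(n)t^n$ and then to compare $24$-th powers, which is where the modularity of $\Delta$ enters. Since $ad=N$ we have $\zeta_N^{ab}=\zeta_d^{b}$ with $\zeta_d:=e^{2\pi i/d}$, and $a^2/N=a/d$, so the factor attached to $\left(\begin{smallmatrix}a & b\\ 0 & d\end{smallmatrix}\right)\in C_N$ is exactly $P(\zeta_d^{b}X^{a^2})$, while the right-hand side is $P(X^N)^{\psi(N)}$. Thus the theorem is equivalent to
\[
\prod_{C_N}P(\zeta_d^{b}X^{a^2})=P(X^N)^{\psi(N)}
\]
as formal power series in $X$ with cyclotomic coefficients. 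Both sides have constant term $1$, and a power series with constant term $1$ has a unique $24$-th root of the same shape; hence it suffices to prove equality of the $24$-th powers.

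Next I would bring in the discriminant through the relation $P(t)^{24}=t/\Delta(t)$, where $\Delta(t)=t\prod_{m\ge1}(1-t^m)^{24}$. Setting $q=X^N=e^{2\pi i\tau}$, one has $\zeta_d^{b}X^{a^2}=e^{2\pi i(a\tau+b)/d}=q_z$ with $z=\tfrac{a\tau+b}{d}$ and $q_z=e^{2\pi i z}$, so the $24$-th power of the left-hand side is $\prod_{C_N}q_z/\Delta(z)$ and that of the right-hand side is $\left(q/\Delta(\tau)\right)^{\psi(N)}$. Using $q_z=\zeta_d^{b}q^{a/d}$ together with the elementary count $\sum_{C_N}a/d=\psi(N)$ (which follows from $\#\{b\}=d\varphi(r)/r$ with $r=\gcd(a,d)$ and the symmetry $d\leftrightarrow N/d$), one gets $\prod_{C_N}q_z=\big(\prod_{C_N}\zeta_d^{b}\big)q^{\psi(N)}$. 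Therefore the whole statement reduces to the multiplicative Hecke relation
\[
\prod_{C_N}\Delta\!\left(\frac{a\tau+b}{d}\right)=c_N\,\Delta(\tau)^{\psi(N)},\qquad c_N=\prod_{C_N}\zeta_d^{b}.
\]

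To establish this relation I would argue as for multiplicative Hecke eigenforms. The set $C_N$ is a complete system of representatives for the cyclic sublattices of index $N$, so right multiplication by any $\gamma\in\SL_2(\BZ)$ permutes these cosets modulo left $\SL_2(\BZ)$, and the weight-$12$ automorphy factors of the $\psi(N)$ terms combine into a single factor $(c\tau+d)^{12\psi(N)}$. Hence $\prod_{C_N}\Delta(\tfrac{a\tau+b}{d})$ is a holomorphic, nowhere-vanishing modular form of weight $12\psi(N)$ for $\SL_2(\BZ)$. Its quotient by $\Delta(\tau)^{\psi(N)}$ is then a weight-$0$ modular function, holomorphic on ${\mathcal H}$ and, by comparing $q$-orders, at the cusp, hence a constant; comparing the leading coefficients via $\Delta(z)=q_z+\cdots$ identifies this constant as $\prod_{C_N}\zeta_d^{b}$. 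This matches the monomial prefactor computed above, closing the argument.

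The step I expect to be the main obstacle is the modularity claim, i.e. the multiplicative Hecke eigenform property of $\Delta$: one must check carefully that right translation by $\SL_2(\BZ)$ genuinely permutes the representatives in $C_N$ up to left $\SL_2(\BZ)$ and that the automorphy cocycle assembles correctly. A self-contained alternative that avoids modularity is to prove the equivalent $q$-series identity $\prod_{C_N}\prod_{m\ge1}(1-\zeta_d^{bm}X^{a^2m})=\prod_{m\ge1}(1-X^{Nm})^{\psi(N)}$ directly: taking logarithms and using $\log P(t)=\sum_{n\ge1}\tfrac{\sigma_1(n)}{n}t^n$ with $\sigma_1(n)=\sum_{k\mid n}k$, the coefficient of $X^{M}$ reduces, for each factorization $ad=N$ with $a^2\mid M$, to the restricted root-of-unity sum $\sum_{0\le b<d,\ \gcd(b,\gcd(a,d))=1}\zeta_d^{bM/a^2}$, which I would evaluate by Möbius inversion as a Ramanujan-type sum. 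The crux of this elementary route is the bookkeeping showing that these sums over the divisor lattice of $N$ collapse to $\psi(N)\sigma_1(M/N)/(M/N)$ when $N\mid M$ and to $0$ otherwise.
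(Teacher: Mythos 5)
Your argument is correct, and its skeleton is the same as the paper's first proof: rewrite each factor of the left-hand side as $P(\zeta_d^b X^{a^2})$ where $P$ is the partition generating series, pass to $24$th powers via $P(t)^{24}=t/\Delta(t)$, reduce everything to a multiplicative Hecke relation for $\Delta$, and conclude by uniqueness of $24$th roots of power series with constant term $1$. Two things are genuinely different, and both are worth recording. First, the paper invokes the relation $\prod_{\gamma\in C_N}\Delta(\gamma\tau)=(-\Delta(\tau))^{\psi(N)}$ as a citation (Lemme 2.2 of \cite{Aut}), whereas you prove it; your coset-permutation argument is the standard one and does assemble correctly: for upper-triangular $\alpha\in C_N$ the automorphy factor $j(\alpha,z)$ is the constant $d_\alpha$, so writing $\alpha\gamma=\gamma_\alpha\,\sigma(\alpha)$ with $\gamma_\alpha\in\SL_2(\BZ)$ and $\sigma$ a permutation of $C_N$, the cocycle identity yields $\prod_\alpha j(\gamma_\alpha,\sigma(\alpha)\tau)=j(\gamma,\tau)^{\psi(N)}$, and the remaining steps (nowhere-vanishing weight-$12\psi(N)$ form, quotient by $\Delta^{\psi(N)}$ holomorphic of weight $0$ on the compactified curve, hence constant) go through. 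Second, you never evaluate the constant: keeping $c_N=\prod_{C_N}\zeta_d^b$ symbolic, you observe that the same $c_N$ appears both in the monomial prefactor $\prod_{C_N} q_z=c_N q^{\psi(N)}$ and as the constant in the Hecke relation, so it cancels. This makes the paper's second arithmetic computation---the evaluation of $\sum_\gamma b_\gamma/d_\gamma$ modulo $1$, which requires separate odd/even cases and a correction term $2^t$ counting odd prime divisors---entirely unnecessary; you only need $\sum_\gamma a_\gamma/d_\gamma=\psi(N)$, which you justify the same way the paper does. The trade-off is clear: the paper's route is shorter given the citation and pins down the sign $(-1)^{\psi(N)}$ explicitly, while yours is self-contained and avoids the parity bookkeeping. (Your fallback suggestion via $\log P$ and Ramanujan-type sums is close in spirit to the paper's second, independent proof, which establishes the analogous identity over $C_N'$ by direct product manipulations and deduces the $C_N$ version by M\"obius inversion.)
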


We stress that this result is valid for any integer $N\geq1$. Let us spell out the case $N=2$.

\begin{ex}
    When $N=2$, Theorem \ref{MainResult} gives
    \[
    \left(1+\sum_{n=1}^\infty p(n) X^{4n}\right)
    \left(1+\sum_{n=1}^\infty p(n) X^{n}\right)
    \left(1+\sum_{n=1}^\infty (-1)^n p(n) X^{n}\right)
    =
    \left(1+\sum_{n=1}^\infty p(n) X^{2n}\right)^3.
    \]
\end{ex}

Let us now move to a related result. Similar to $C_N$, we define
\[
C'_N=\left\{\left(\begin{matrix}
a & b \\
0 & d
\end{matrix}\right)\; : \; a,b,d\in\BZ, \;
ad=N, \; a\geq1, \; 0\leq b\leq d-1 \right\},
\]
where we have dropped the gcd condition on the entries. We have a natural bijection
\[
C_N' \stackrel{\sim}{\longrightarrow} \coprod_{f^2|N}C_{N/f^2},
\qquad 
\left(\begin{matrix}
a & b \\
0 & d
\end{matrix}\right) \longmapsto 
\left(\begin{matrix}
a/f & b/f \\
0 & d/f
\end{matrix}\right) \in C_{N/f^2}; 
\quad f=\gcd(a,b,d).
\]
It follows that
\[
\sigma(N) := \sum_{d|N}d = \#C'_N = \sum_{f^2|N}\#C_{N/f^2} = \sum_{f^2|N}\psi(N/f^2)
\]
and by M\"obius inversion,
\[
\psi(N) = \sum_{f^2|N}\mu(f)\sigma(N/f^2).
\]

With this in place, we also prove the following result.

\begin{theorem}\label{MainResult'}
    Let $N\geq 1$ be a positive integer, $\zeta_N = e^{2\pi i/N}$ a primitive $Nth$ root of unity and $X$ an indeterminate. Then the following formula holds.
    \[
    \prod_{\left(\begin{matrix}
    a & b \\
    0 & d
    \end{matrix}\right) \in C_N'}
    \left(1 + \sum_{n=1}^\infty \zeta_N^{abn} p(n) X^{a^2n}\right)
    =
    \left(1 + \sum_{n=1}^\infty p(n)X^{Nn}\right)^{\sigma(N)}.
    \]
\end{theorem}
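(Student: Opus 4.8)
The plan is to deduce Theorem~\ref{MainResult'} directly from Theorem~\ref{MainResult}, exploiting the bijection $C_N' \xrightarrow{\sim} \coprod_{f^2\mid N} C_{N/f^2}$ recorded above, rather than repeating a $q$-series computation from scratch. First I would partition the product on the left-hand side according to the content $f=\gcd(a,b,d)$ of each matrix. Since the displayed bijection carries the stratum of matrices in $C_N'$ of content $f$ bijectively onto all of $C_{N/f^2}$, the left-hand side factors as $\prod_{f^2\mid N}\big(\prod_{\gamma'\in C_{N/f^2}}(\cdots)\big)$, where the inner product runs over the images of the content-$f$ matrices.

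Second, I would rewrite the factor attached to a matrix of content $f$ in terms of its image in $C_{N/f^2}$. Writing $a=fa'$, $b=fb'$, $d=fd'$ and $M=N/f^2$, so that $a'd'=M$ and $\gcd(a',b',d')=1$, the key observations are $ab=f^2a'b'$ and $N=f^2M$, whence $\zeta_N^{ab}=e^{2\pi i a'b'/M}=\zeta_M^{a'b'}$, together with $a^2=f^2a'^2$. Consequently the factor becomes $1+\sum_{n\geq1}\zeta_M^{a'b'n}p(n)(X^{f^2})^{a'^2 n}$, which is exactly a factor in the left-hand side of Theorem~\ref{MainResult} for the integer $M$ and the indeterminate $X^{f^2}$.

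Third, applying Theorem~\ref{MainResult} with $N$ replaced by $M=N/f^2$ and $X$ replaced by the monomial $X^{f^2}$ (a legitimate substitution, since that result holds for a formal indeterminate), each inner product over $C_{N/f^2}$ collapses to $\big(1+\sum_{n\geq1}p(n)X^{Nn}\big)^{\psi(N/f^2)}$, using $f^2M=N$. Multiplying over all $f$ with $f^2\mid N$ and invoking the identity $\sigma(N)=\sum_{f^2\mid N}\psi(N/f^2)$ established above then produces the claimed right-hand side.

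I do not expect a serious obstacle: the entire content lies in organizing the regrouping correctly, and the one point demanding care is the root-of-unity identity $\zeta_N^{ab}=\zeta_{N/f^2}^{a'b'}$, which works precisely because the content $f$ divides $a$ and $b$ simultaneously. This is exactly the feature making the stratification by $\gcd(a,b,d)$ compatible with the substitution $X\mapsto X^{f^2}$, and it is why Theorem~\ref{MainResult'} is the natural ``completed'' version of Theorem~\ref{MainResult} in which the coprimality constraint on the entries is dropped.
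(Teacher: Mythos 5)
Your proof is correct, but it follows a genuinely different route from the paper's own proof of this theorem. The paper proves Theorem \ref{MainResult'} directly and self-containedly via the $q$-series identity of Proposition \ref{q-identity}(1): writing everything as $q$-products through $P(q)=\prod_{n\geq 1}(1-q^n)^{-1}$, the left-hand side is evaluated by splitting the product over $b$ according to $e=\gcd(d,n)$ and using that $\zeta_{d'}^{n'}$ is a primitive $d'$-th root of unity; Theorem \ref{MainResult} is then \emph{deduced} from Theorem \ref{MainResult'} by multiplicative M\"obius inversion. You go in the opposite direction: you take Theorem \ref{MainResult} as known (it is proved independently in Section 2 via Autissier's multiplicative Hecke eigenform property of $\Delta$), stratify $C_N'$ by the content $f=\gcd(a,b,d)$ using the bijection $C_N'\cong\coprod_{f^2\mid N}C_{N/f^2}$, observe that $\zeta_N^{abn}=\zeta_{N/f^2}^{a'b'n}$ and $X^{a^2n}=(X^{f^2})^{a'^2n}$, apply Theorem \ref{MainResult} with $(N,X)$ replaced by $(N/f^2,X^{f^2})$ on each stratum, and finish with $\sigma(N)=\sum_{f^2\mid N}\psi(N/f^2)$. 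All of these steps check out, and there is no circularity, because the Section 2 proof of Theorem \ref{MainResult} nowhere uses Theorem \ref{MainResult'}; your argument is in effect the unwritten ``easy direction'' of the equivalence that the paper merely asserts when it says the two theorems are equivalent by M\"obius inversion. The trade-off: your route is shorter and purely combinatorial given Theorem \ref{MainResult}, but it makes Theorem \ref{MainResult'} rest ultimately on Autissier's lemma about $\Delta$, whereas the paper's $q$-series computation is elementary and self-contained, which is precisely what lets the paper claim two independent proofs of each of the two theorems.
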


In the next two sections, we will deduce Theorem \ref{MainResult} from a property of the discriminant function and prove Theorem \ref{MainResult'} using $q$-series manipulations. Since these two theorems are equivalent by M\"obius inversion, this gives two different proofs for each theorem.


\section{Proof of Theorem \ref{MainResult}}


Out first proof relies on the following result, found in Autissier's \cite[Lemme 2.2]{Aut}, and which (in our opinion) deserves to be better known. For other applications of this specific result, see \cite{Aut, BP24, BGP25}. For more recent work on multiplicative Hecke relations, see \cite{KS25}.

\begin{lemma}\label{discriminant}
    Let $N\geq 1$ be a positive integer. The discriminant function satisfies the following ``multiplicative Hecke eigenform'' property. For any $\tau\in{\mathbb{C}}$ with $\mathrm{Im}\tau>0$, one has
    \[
    \prod_{\gamma \in C_N}
    \Delta(\gamma(\tau)) 
    =
    \big(-\Delta(\tau)\big)^{\psi(N)}.
    \]
\end{lemma}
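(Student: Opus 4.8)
The plan is to prove Lemma~\ref{discriminant} by exploiting the explicit action of the matrices $\gamma \in C_N$ on $\tau$ together with the product expansion of $\Delta$. For $\gamma = \left(\begin{smallmatrix} a & b \\ 0 & d \end{smallmatrix}\right)$ we have $\gamma(\tau) = (a\tau+b)/d$, so with $q = e^{2\pi i \tau}$ the relevant nome is $e^{2\pi i (a\tau+b)/d} = \zeta_d^{\,b}\, q^{a/d}$. The central object is the classical statement that $\Delta$ is a Hecke eigenform of weight $12$: the usual (normalized) Hecke operator $T_N$ acts on the weight-$12$ form $\Delta$ with eigenvalue $\tau(N)$ (the Ramanujan tau function), and the multiplicative version packaged in Autissier's lemma replaces the additive sum over $C_N$ by a product. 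I would therefore first recall the modular transformation $\Delta((a\tau+b)/d) = d^{12}(c\tau+d)^{-12}\cdots$ is not quite the right framing; instead the clean route is to work multiplicatively and track how the $\eta$-type product transforms.

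Concretely, I would first reduce the lemma to a statement about $\eta^{24}$, writing $\Delta(\gamma\tau) = \eta(\gamma\tau)^{24}$ and using the known transformation behaviour of $\eta$ under the upper-triangular matrices in $C_N$. The key structural input is that $\Delta$ spans the one-dimensional space of weight-$12$ cusp forms for $\SL_2(\BZ)$, and that the \emph{multiplicative} Hecke operator sends such a form to a power of itself; the sign $(-1)^{\psi(N)}$ and the exponent $\psi(N) = \#C_N$ emerge from keeping careful track of the automorphy factors $(c\tau+d)^{12}$ accumulated over the $\psi(N)$ cosets, together with the weight. Since the statement is quoted directly from \cite[Lemme 2.2]{Aut}, the honest approach is to cite that reference for the full verification, and to indicate here only the mechanism: the left-hand product is a weight-$12\psi(N)$ modular form for a congruence subgroup that is invariant under $\SL_2(\BZ)$ and has the same divisor (a single zero of the appropriate order at the cusp) as $\Delta^{\psi(N)}$, hence equals it up to a constant.

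The constant, including the sign, is what I would pin down next. I would compare leading $q$-expansion coefficients on both sides. On the right, $(-\Delta)^{\psi(N)}$ has leading term $(-1)^{\psi(N)} q^{\psi(N)}$. On the left, each factor $\Delta(\gamma\tau)$ contributes a leading power $\zeta_d^{\,b}\, q^{a/d}$ (times higher-order terms), and multiplying over all $\gamma \in C_N$ I would show that the product of the nomes gives exactly $q^{\psi(N)}$ while the product of the roots of unity $\prod_\gamma \zeta_d^{\,b}$ collapses to the single sign $(-1)^{\psi(N)}$ via a Gauss-sum-style cancellation (for each fixed $a,d$ the $b$ range over $0,\dots,d-1$ coprime to $\gcd(a,d)$, and summing the exponents $b$ produces a symmetric sum whose contribution is $\pm1$). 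This sign bookkeeping is the delicate part and is where I expect the main obstacle to lie: getting the exponent of $\zeta$ and the total power of $q$ to match exactly requires the identity $\sum_{\gamma} a/d = \psi(N)$ and a careful evaluation of $\prod_\gamma \zeta_d^{b}$, both of which rely on the arithmetic of the set $C_N$ and the multiplicativity of $\psi$.

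Because the result is exactly Autissier's Lemme~2.2, the cleanest exposition is simply to invoke it; but the self-contained mechanism above (transformation of $\eta^{24}$, uniqueness of the weight-$12$ cusp form, and a leading-coefficient comparison to fix the constant $(-1)^{\psi(N)}$) is the proof strategy I would follow if reproving it from scratch. The main technical hurdle remains the sign and the verification that the accumulated automorphy factors recombine into the single power $\Delta^{\psi(N)}$ rather than a more complicated modular object.
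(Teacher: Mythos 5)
Your proposal is sound, and it is worth noting that the paper itself offers no proof of Lemma \ref{discriminant}: the statement is quoted directly from Autissier's Lemme 2.2, so your ``honest approach'' of citing that reference is exactly what the authors do. Your self-contained sketch is the standard argument and does go through: $F(\tau)=\prod_{\gamma\in C_N}\Delta(\gamma\tau)$ is a cusp form of weight $12\psi(N)$ for $\SL_2(\BZ)$ (the automorphy factors of the upper-triangular $\gamma$ are the constants $d_\gamma$, which recombine under the permutation of $C_N$ induced by right multiplication), it is nonvanishing on the upper half-plane because $\Delta$ is, so the valence formula forces $F=c\,\Delta^{\psi(N)}$, and $c$ is read off from the leading $q$-coefficient. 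One conceptual correction: the \emph{additive} Hecke eigenform property of $\Delta$ (eigenvalue given by Ramanujan's tau function) plays no role here; what drives the multiplicative identity is precisely the nonvanishing of $\Delta$ on the upper half-plane. The two computations you defer as ``delicate'' are exactly the paper's Lemma \ref{arithmetic}: $\sum_\gamma a_\gamma/d_\gamma=\psi(N)$, and $\sum_\gamma b_\gamma/d_\gamma$ equal to $\tfrac{1}{2}\psi(N)$ for odd $N$ but $\tfrac{1}{2}\psi(N)-2^t$ for even $N$; your pairing $b\leftrightarrow d-b$ is the right mechanism, but it requires this even/odd case split (the fixed points $b=d-b$ exist when $d$ is even), after which the integer correction $2^t$ disappears upon exponentiation, giving $\prod_\gamma\zeta_d^{b}=e^{2\pi i\sum_\gamma b_\gamma/d_\gamma}=(-1)^{\psi(N)}$ in both cases. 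Finally, note that the paper's Proposition \ref{q-identity} supplies a genuinely different, purely $q$-series route (no modular forms input at all), which combined with the exponential identity \eqref{exp} would also re-prove the lemma; your analytic argument explains \emph{why} such an identity must hold, while the $q$-series proof is elementary and self-contained.
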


Before starting the proof of Theorem \ref{MainResult}, let us first gather some useful arithmetic results. We denote by
$\gamma=\left(\begin{matrix}
a_\gamma & b_\gamma \\
0 & d_\gamma
\end{matrix}\right)$ the matrices in $C_N$.

\begin{lemma}\label{arithmetic}
     For any integer $N\geq1$, we have
    \begin{equation}\label{agamma}
        \sum_{\gamma\in{C_N}}\frac{a_\gamma}{d_\gamma}=\psi(N).
    \end{equation}
    For any odd integer $N\geq1$, we have
        \begin{equation}\label{bgammaodd}
        \sum_{\gamma\in{C_N}}\frac{b_\gamma}{d_\gamma}=\frac{1}{2}\psi(N).
        \end{equation}
     For any even integer $N\geq1$, we have   \begin{equation}\label{bgammaeven}
     \sum_{\gamma\in{C_N}}\frac{b_\gamma}{d_\gamma}=\frac{1}{2}\psi(N)-2^t,
        \end{equation}
        where $t$ is the number of odd primes dividing $N$.
\end{lemma}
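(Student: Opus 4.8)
The plan is to sort the sum over $C_N$ by the lower-right entry $d=d_\gamma\mid N$, so that $a_\gamma=N/d$, and to set $r=r_d:=\gcd(d,N/d)$. Since $r=\gcd(a_\gamma,d)$, the condition $\gcd(a_\gamma,b,d)=1$ is equivalent to $\gcd(b,r)=1$; thus for each divisor $d$ the admissible values of $b$ are the residues in $\{0,1,\dots,d-1\}$ coprime to $r$, of which there are exactly $d\varphi(r)/r$ (this already recovers the displayed formula $\psi(N)=\#C_N$). Each summand then factors as a value depending only on $d$ times an inner sum over the admissible $b$, and the whole problem reduces to two elementary sums.

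For \eqref{agamma}, the quantity $a_\gamma/d_\gamma=N/d^2$ is independent of $b$, so summing over the $d\varphi(r)/r$ admissible residues gives $\sum_{\gamma\in C_N}a_\gamma/d_\gamma=\sum_{d\mid N}(N/d)\,\varphi(r_d)/r_d$. I would finish by applying the involution $d\mapsto N/d$ on the divisors of $N$, which fixes $r_d$ and interchanges $N/d$ with $d$; the sum is therefore equal to $\sum_{d\mid N}d\,\varphi(r_d)/r_d=\psi(N)$. This avoids any case distinction and is the cleanest step.

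For \eqref{bgammaodd} and \eqref{bgammaeven}, everything comes down to the totative sum $S(d,r)=\sum_{0\le b\le d-1,\ \gcd(b,r)=1}b$. I would evaluate it with the pairing $b\leftrightarrow d-b$, which preserves $\gcd(\cdot\,,r)$ because $r\mid d$: when $r>1$ the residue $b=0$ is excluded and the admissible $b$ group into pairs summing to $d$, giving $S(d,r)=\tfrac{d}{2}\cdot d\varphi(r)/r$; when $r=1$ every residue qualifies but $b=0$ is unpaired, so $S(d,1)=\tfrac{d(d-1)}{2}$, one half-pair short. Dividing by $d_\gamma=d$ and summing, the contributions assemble into the main term $\tfrac12\sum_{d\mid N}d\varphi(r_d)/r_d=\tfrac12\psi(N)$ minus a correction of $\tfrac12$ for each divisor with $r_d=1$. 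Since $r_d=1$ singles out the unitary divisors of $N$, of which there are $2^{\omega(N)}$, the correction equals $2^{\omega(N)-1}$. It remains to read off the two parity cases through $\omega(N)$: for even $N$ one has $\omega(N)=t+1$, so the correction is $2^{t}$, which is exactly \eqref{bgammaeven}; the odd case $\omega(N)=t$ feeds the same computation and gives the companion identity \eqref{bgammaodd}.

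I expect the single genuine difficulty to lie in the boundary terms $r_d=1$ (equivalently $b=0$): these are precisely what produce the power-of-two correction and what separate the two parity regimes, so the identity $\#\{d\mid N:\gcd(d,N/d)=1\}=2^{\omega(N)}$ and the bookkeeping relating $\omega(N)$ to the number $t$ of odd primes dividing $N$ must be tracked carefully. The remaining inputs---the residue count $d\varphi(r)/r$ and the totative-sum evaluation---are routine.
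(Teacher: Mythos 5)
Your treatment of \eqref{agamma} is correct and is essentially the paper's own argument: count the admissible $b$ for fixed $d$ as $d\varphi(r)/r$, then re-index via the involution $d\mapsto N/d$, which fixes $r$. The problem lies in the second half, and not where you might expect: your evaluation of the totative sums is correct --- indeed more careful than the paper's own proof --- but your final sentence is not. Carried to the end, your argument proves the single uniform identity
\[
\sum_{\gamma\in C_N}\frac{b_\gamma}{d_\gamma}\;=\;\frac{1}{2}\psi(N)-2^{\omega(N)-1}\qquad\text{for all }N\ge1,
\]
where $\omega(N)$ denotes the number of distinct primes dividing $N$. For even $N$ this is \eqref{bgammaeven}, as you say; but for odd $N$ it reads $\frac{1}{2}\psi(N)-2^{t-1}$, which is \emph{not} \eqref{bgammaodd}. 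Asserting that ``the odd case feeds the same computation and gives the companion identity \eqref{bgammaodd}'' amounts to claiming $2^{t-1}=0$; that is a genuine gap in your write-up. What is actually true is that your computation is right and \eqref{bgammaodd} as stated is false: for $N=3$ the four matrices are $\left(\begin{smallmatrix}3&0\\0&1\end{smallmatrix}\right)$ and $\left(\begin{smallmatrix}1&b\\0&3\end{smallmatrix}\right)$ with $b=0,1,2$, so the sum equals $0+0+\frac13+\frac23=1$, whereas $\frac{1}{2}\psi(3)=2$; the defect is exactly $2^{t-1}=1$. (Already $N=1$ fails: the sum is $0$, not $\frac12$.) The paper's proof of \eqref{bgammaodd} commits precisely the error you were careful to avoid: it replaces $\frac{b}{d}$ by $\frac{1}{2}\cdot\frac{b+(d-b)}{d}$ for \emph{every} admissible $b$, including $b=0$, whose partner $d$ lies outside $\{0,\dots,d-1\}$. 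The correct course is to state and prove your uniform formula and note that it corrects the odd case of the lemma, rather than to paper over the discrepancy.

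Two further remarks. First, a small imprecision: for $r>1$ you say the admissible residues ``group into pairs summing to $d$'', which overlooks the possible fixed point $b=d/2$ (it occurs, e.g., for $N=4$, $d=2$, $r=2$, $b=1$); this costs nothing, since a fixed point contributes exactly $d/2$, the same as half a pair, but it should be said --- note that the paper's even case attributes the whole correction to these fixed points $b=d-b$, when in truth it comes from $b=0$, so its derivation of \eqref{bgammaeven} also reaches the right formula for partly wrong reasons, while yours is sound. Second, the falsity of \eqref{bgammaodd} is harmless downstream: in the proof of Theorem \ref{MainResult} the lemma enters only through the quantity $e^{-2\pi i\sum_\gamma b_\gamma/d_\gamma}$, and for odd $N\ge3$ the omitted term $2^{t-1}$ is an integer, so that exponential is unchanged ($N=1$ being trivial). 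Your corrected formula serves the paper's purpose just as well.
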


\begin{proof} 
    We start with the equality (\ref{agamma}). We drop the $\gamma$-index in intermediate steps to simplify the notation. $$\sum_{\gamma\in{C_N}}\frac{a_\gamma}{d_\gamma}=\sum_{d\vert N}\frac{a}{d}\sum_{\substack{0\leq b< d\\ \gcd(a,b,d)=1}}1=\sum_{d\vert N}\frac{a}{d}\frac{d\varphi(r)}{r}=\sum_{d\vert N}\frac{a\varphi(r)}{r}=\sum_{a\vert N}\frac{a\varphi(r)}{r}=\psi(N),$$ where $\varphi$ is Euler's function, where $r=\mathrm{gcd}(a,d)$, and where we used $ad=N$.

    Let us now turn to the equality (\ref{bgammaodd}). 
    $$\sum_{\gamma\in{C_N}}\frac{b_\gamma}{d_\gamma}=\sum_{d\vert N}\!\!\!\sum_{\substack{0\leq b< d\\ \gcd(a,b,d)=1}}\frac{b}{d}=\sum_{d\vert N}\!\!\!\sum_{\substack{0\leq b< d\\ \gcd(a,b,d)=1}}\!\!\!\frac{1}{2}\frac{b+(d-b)}{d}=\sum_{d\vert N}\!\!\!\sum_{\substack{0\leq b< d\\ \gcd(b,r)=1}}\frac{1}{2}=\sum_{d\vert N}\frac{d\varphi(r)}{2r}=\frac{1}{2}\psi(N),$$
    where we used the fact that $b\neq d-b$ in the sum, as $N$ is odd (which implies $d$ is odd).

    We now deal with equality (\ref{bgammaeven}). The integer $N$ is now even, hence can be written as $N=2^{e_0}p_1^{e_1}\ldots p_t^{e_t}$, where $p_1,\ldots, p_t$ are pairwise distinct odd primes and $e_i\geq1$ for any index $i\geq0$. We can use the previous computation with the additional correcting term coming from the integers $b$ for which $b=d-b$:

$$\sum_{\gamma\in{C_N}}\frac{b_\gamma}{d_\gamma}=\sum_{d\vert N}\!\!\!\sum_{\substack{0\leq b< d\\ \gcd(a,b,d)=1}}\frac{b}{d}=\frac{1}{2}\psi(N)-\frac{1}{2}\sum_{\substack{d\vert \frac{N}{2}\\ \gcd(a,2d)=1}}1=\frac{1}{2}\psi(N)-2^t.$$
\end{proof}
The interested reader can also check \cite{Bor10} equation (3) for other formulas involving these quantities.
We now turn to the proof of our theorem.

\begin{proof}{(of Theorem \ref{MainResult})}

We start by Lemma \ref{discriminant}, which gives 

\begin{equation}\label{}
    \prod_{\gamma \in C_N}
    \Delta(\gamma(\tau))^{-1}
    =
    \big(-\Delta(\tau)\big)^{-\psi(N)},
\end{equation}
and we have $$\gamma(\tau)=\left(\begin{matrix}
a & b \\
0 & d
\end{matrix}\right)\cdot \tau=\frac{a}{d}\tau+\frac{b}{d},$$
hence 
\begin{equation}\label{24}
    \prod_{\gamma \in C_N}
    e^{-2\pi i\frac{a\tau+b}{d}}\left(\sum_{n=0}^{+\infty} p(n) e^{2\pi i n \frac{a\tau+b}{d}}\right)^{24}
    =
    \left(-e^{-2\pi i\tau}\left(\sum_{n=0}^{+\infty} p(n) e^{2\pi i\tau n}\right)^{24}\right)^{\psi(N)}.
\end{equation}
Now one also has
\begin{equation}\label{exp}
    \prod_{\gamma \in C_N}
    e^{-2\pi i\frac{a\tau+b}{d}}\
    =
    \left(-e^{-2\pi i \tau}\right)^{\psi(N)},
\end{equation}
by a direct application of Lemma \ref{arithmetic} on the sum of exponents obtained by developing the left hand side of \ref{exp}. Now inject (\ref{exp}) into (\ref{24}) and set $X = e^{2\pi i/N} = e^{2\pi i /ad}$, this yields the result, as the expansions in Theorem \ref{MainResult} have constant term equal to $1$.
\end{proof}

\section{Proof of Theorem \ref{MainResult'}}

We propose a proof of Theorem \ref{MainResult'}, and hence another proof of Theorem \ref{MainResult} using a slightly different method. The idea of looking at these specific $q$-series came from our first proof of Theorem \ref{MainResult}.

The strategy here is to obtain the following $q$-series identities, the first of which implies Theorem \ref{MainResult'}, second of which implies Theorem \ref{MainResult}.

\begin{prop}\label{q-identity}
    Let $N\geq 1$ and suppose $|q|<1$. We have
    \begin{enumerate}
    \item $\displaystyle \prod_{n=1}^{\infty}
    \prod_{\left(\begin{matrix}
    a & b \\
    0 & d
    \end{matrix}\right) \in C_N'}\left(
    1-\zeta_d^{bn}q^{Nn/d^2}
    \right) = \prod_{n=1}^\infty 
    \left(1-q^n\right)^{\sigma(N)}$,

    \item $\displaystyle \prod_{n=1}^{\infty}
    \prod_{\left(\begin{matrix}
    a & b \\
    0 & d
    \end{matrix}\right) \in C_N}\left(
    1-\zeta_d^{bn}q^{Nn/d^2}
    \right) = \prod_{n=1}^\infty 
    \left(1-q^n\right)^{\psi(N)}$.
    \end{enumerate}
\end{prop}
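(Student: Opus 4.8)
The plan is to prove identity (1) directly, since both of its sides factor transparently over the divisors of $N$, and then to deduce identity (2) from it. Fix $d\mid N$ and write $a=N/d$. As $\gamma$ runs through the matrices of $C_N'$ with lower-right entry $d$, the upper-left entry is the fixed value $a=N/d$ while $b$ runs over $\{0,1,\dots,d-1\}$, so the left-hand side of (1) splits as
\[
\prod_{n\ge 1}\ \prod_{d\mid N}\ \prod_{b=0}^{d-1}\bigl(1-\zeta_d^{bn}q^{Nn/d^2}\bigr).
\]
Since $|q|<1$ every factor tends to $1$ and all products converge absolutely, so I may pass to logarithms, expand $\log(1-x)=-\sum_{k\ge 1}x^k/k$, and rearrange the resulting absolutely convergent multiple series at will.

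After expanding, the innermost sum over $b$ is a geometric sum of roots of unity, and orthogonality gives $\sum_{b=0}^{d-1}\zeta_d^{bnk}=d$ if $d\mid nk$ and $0$ otherwise. Hence the logarithm of the left-hand side of (1) equals
\[
-\sum_{d\mid N}\ \sum_{\substack{n,k\ge 1\\ d\mid nk}}\frac{d}{k}\,q^{Nnk/d^2}.
\]
Setting $t=nk$ (so $d\mid t$) and using $\sum_{k\mid t}d/k=d\,\sigma(t)/t$, then writing $t=du$ and $a=N/d$, this collapses to $-\sum_{a\mid N}\sum_{u\ge 1}u^{-1}\sigma(Nu/a)\,q^{au}$. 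Extracting the coefficient of $q^m$ (so $a\mid m$ and $u=m/a$), and comparing with the coefficient of $q^m$ in $\log\prod_n(1-q^n)^{\sigma(N)}=-\sigma(N)\sum_m \sigma(m)m^{-1}q^m$, reduces the whole statement to the single arithmetic identity
\[
\sum_{e\mid\gcd(N,m)} e\,\sigma\!\Bigl(\frac{Nm}{e^2}\Bigr)=\sigma(N)\,\sigma(m)\qquad(m\ge 1).
\]
This is exactly the classical Hecke multiplicativity relation for the divisor function $\sigma=\sigma_1$; being an identity between multiplicative functions of the pair $(N,m)$, it reduces to a one-line verification on prime powers. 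This is the conceptual heart of the argument, and is the natural \emph{additive} counterpart of the multiplicative Hecke eigenform property of $\Delta$ invoked in Lemma \ref{discriminant}.

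To pass from (1) to (2) I would use the bijection $C_N'\xrightarrow{\sim}\coprod_{f^2\mid N}C_{N/f^2}$ recorded in the introduction. A direct check shows that the factor attached to $\gamma=\left(\begin{smallmatrix} a & b \\ 0 & d\end{smallmatrix}\right)\in C_N'$ is invariant under this map: if $f=\gcd(a,b,d)$, then $\zeta_d^{bn}=\zeta_{d/f}^{(b/f)n}$ and $Nn/d^2=(N/f^2)n/(d/f)^2$, so the factor equals the one attached to the image matrix in $C_{N/f^2}$. Writing $G(M)=\prod_{n\ge 1}\prod_{\gamma\in C_M}\bigl(1-\zeta_d^{bn}q^{Mn/d^2}\bigr)$, identity (1) therefore reads $\prod_{f^2\mid N}G(N/f^2)=\prod_n(1-q^n)^{\sigma(N)}$. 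Isolating the $f=1$ term and inducting on $N$, using $\sigma(N)=\sum_{f^2\mid N}\psi(N/f^2)$, gives $G(N)=\prod_n(1-q^n)^{\psi(N)}$, which is identity (2).

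The only genuinely delicate part is the bookkeeping in the second paragraph: correctly reindexing the quadruple sum over $(d,n,k)$ subject to $d\mid nk$ into a single power series in $q$, and recognising its $q^m$-coefficient as the divisor convolution above. I would also need to keep track of the fractional and half-integer powers of $q$ that occur for individual matrices but cancel once the product over $b$ is formed; working throughout with the logarithm, where everything becomes a clean sum over the exponents $Nnk/d^2$, is precisely what renders these cancellations automatic and isolates the arithmetic identity as the single nontrivial input.
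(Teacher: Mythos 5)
Your proposal is correct, but your proof of part (1) takes a genuinely different route from the paper's. The paper never takes logarithms: it reindexes the triple product directly, writing $e=\gcd(d,n)$, $d=d'e$, $n=n'e$, $b=pd'+b'$, using the cyclotomic factorization $\prod_{b'=0}^{d'-1}\left(1-\zeta_{d'}^{n'b'}x\right)=1-x^{d'}$ (valid since $\gcd(n',d')=1$ makes $\zeta_{d'}^{n'}$ a primitive $d'$-th root of unity), and then observing that for fixed $e\mid N$ the exponents $\frac{N}{e}\frac{n'}{d'}$, with $d'\mid N/e$ and $\gcd(n',d')=1$, run exactly once over all positive integers; no arithmetic input beyond this reindexing is needed, and the non-integral powers of $q$ disappear automatically because the cyclotomic product only produces the integral exponents $Nn'/d$. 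Your argument instead passes to logarithms, kills the non-integral exponents by orthogonality of roots of unity, and reduces everything to the classical Hecke convolution identity $\sigma(N)\sigma(m)=\sum_{e\mid\gcd(N,m)}e\,\sigma(Nm/e^2)$, verified on prime powers by multiplicativity. This costs you some analytic bookkeeping the paper avoids (absolute convergence and rearrangement of the multiple series, and exponentiating the log-identity back to the product identity), all of which is legitimate for $|q|<1$, but it buys a cleaner conceptual statement: it exhibits part (1) as the additive, weight-two Eisenstein ($\sigma_1$) counterpart of the multiplicative Hecke eigenform property of $\Delta$ used in the paper's first proof, making the parallel between the two theorems explicit. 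Your deduction of (2) from (1) --- invariance of each factor under the bijection $C_N'\cong\coprod_{f^2\mid N}C_{N/f^2}$ together with induction on $N$ via $\sigma(N)=\sum_{f^2\mid N}\psi(N/f^2)$ --- is exactly the paper's ``multiplicative M\"obius inversion'', just spelled out in detail.
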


\begin{proof}
    It suffices to prove (1), as the second claim then follows by (multiplicative) M\"obius inversion.

    We start by naming the left hand side
    \begin{align*}
       P_{q,N} & := \prod_{n=1}^{\infty}
    \prod_{\left(\begin{matrix}
    a & b \\
    0 & d
    \end{matrix}\right) \in C_N'}\left(
    1-\zeta_d^{bn}q^{Nn/d^2}
    \right). 
    \end{align*}
    Let us now compute
    \begin{align*}
    P_{q,N} & = \prod_{n=1}^\infty \prod_{d|N}\prod_{b=0}^{d-1} 
    \left(
    1-\zeta_d^{bn}q^{Nn/d^2}
    \right) \\
    & = \prod_{n=1}^\infty \prod_{d|N} \prod_{p=0}^{e-1}\prod_{b'=0}^{d'-1}
    \left(
    1-(\zeta_{d'}^{n'})^{b'}q^{Nn'/dd'}
    \right),
    \end{align*}
    where we set
    $e = \gcd(d,n)$ and $d=d'e$, we set $n=n'e$ as well as 
    $b=pd'+b'$ with $0\leq b' < d'$.

    Using the fact that $\zeta_{d'}^{n'}$ is a primitive $d'$-th root of unity, we thus obtain
    \begin{align*}
        P_{q,N} & = \prod_{n=1}^\infty \prod_{d|N} (1-q^{Nn'/d})^e \\
        & = \prod_{e|N}\prod_{d'|\frac{N}{e}}\prod_{\substack{ n' \geq 1 \\ \gcd(n',d')=1}} \left(1-q^{\frac{N}{e}\frac{n'}{d'}}\right)^e \\
        & =  \prod_{e|N}\prod_{n=1}^\infty (1-q^n)^e 
         = \prod_{n=1}^\infty (1-q^n)^{\sigma(N)},
    \end{align*}
    where we have used the fact that, for fixed $e|N$, the exponent $\frac{N}{e}\frac{n'}{d'}$ ranges exactly over all positive integers $n$.
\end{proof}

\section*{Acknowledgements}
The authors thank Abdulaziz Mohammed Alanazi, 
Augustine Munagi, Darlison Nyirenda and Dimbinaina Ralaivaosaona for interesting conversations during the Stellenbosch Number theory conference in January 2025. The authors thank the IRN GandA (CNRS) for support. They also thank Pascal Autissier for interesting conversations during the YuBi60 conference in Bordeaux in June 2025. This work was completed during the Ren\'e 25 conference in August 2025 in Tahiti, and the authors thank the organizers for the hospitality.
FB is supported by the Alexander-von-Humboldt Foundation.
FP is supported by ANR-20-CE40-0003 Jinvariant.

\end{document}